\newcommand{\qed}{\hfill \mbox{\raggedright \rule{.07in}{.1in}}}
\newenvironment{proof}{\vspace{1ex}\noindent{\bf Proof}\hspace{0.5em}}
	{\hfill\qed\vspace{1ex}}
\newtheorem{theorem}{Theorem}
\newtheorem{example}{Example}
\newtheorem{observation}{Observation}
\newtheorem{definition}{Definition}
\newtheorem{corollary}{Corollary}
\providecommand{\keywords}[1]{\textbf{\textit{keywords:}} #1}
\date{}
\begin{document}

	\title{The $\gamma$-Signless Laplacian Adjacency Matrix of Mixed Graphs}
	

	\author{ Omar Alomari \thanks{ German Jordanian University, omar.alomari@gju.edu.jo}, Mohammad Abudayah \thanks{German Jordanian University, mohammad.abudayah@gju.edu.jo, corresponding author }, Manal Ghanem \thanks{The university of Jordan, m.ghanem@ju.edu.jo} }
		


\maketitle

\begin{abstract}
The $\alpha$-Hermitian adjacency matrix $H_\alpha$ of a mixed graph $X$ has been recently introduced. It is a generalization of the adjacency matrix of unoriented graphs. In this paper, we consider a special case of the complex number $\alpha$. This enables us to define an incidence matrix of mixed graphs. Consequently, we define a generalization of line graphs as well as a generalization of the signless Laplacian adjacency matrix of graphs. We then study the spectral properties of the signless Laplacian adjacency matrix of a mixed graph. Lastly, we characterize when the signless Laplacian adjacency matrix of a mixed graph is singular and give lower and upper bounds of number of arcs and digons in terms of largest and lowest eigenvalue of the signless Laplacian adjacency matrix.
\end{abstract}

\keywords{Mixed Graphs; Signless Adjacency Matrix; Hermitian Adjacency Matrix; Line Graphs; Bipartite Graphs }

\section{ \normalsize Introduction}
A mixed graph is a triple $(V(X),E(X),\vec{E}(X))$, where $V(X)$ is the set of vertices, $E(X)$ is the set of unoriented edges (digons) and $\vec{E}(X)$ is the set of oriented edges (arcs). Here, one can consider mixed graphs as digraphs, where both ways oriented edges considered as digons. Throughout this paper, an arc from the vertex $u$ to the vertex $v$ in $\vec{E}$ will be denoted by $\vec{uv}$. A digon between the verteces $u$ and $v$ will be denoted by $uv$. A graph that is obtained from a mixed graph by considering all arcs as digons is called the underlying graph of $X$ and will be denoted by $\Gamma(X)$. The degree of a vertex $u$ in $X$, denoted by $deg(u)$, is defined to be the degree of $u$ in $\Gamma(X)$. By a path (resp. a cycle, a walk) in a mixed graph, we mean a path (resp. a cycle, a walk) in $\Gamma(X)$. For an unoriented graph $(V(G),E(G))$, $L_G$ denotes the line graph of $G$. \\

For an unoriented graph $G$, the adjacency matrix of $G$, denoted by $A(G)=(a_{ij})$, is the square symmetric $(0,1)$-matrix whose rows and columns correspond to the vertices of $G$ and $a_{ij}=1$ if the vertices  $u_i$ and $u_j$ are adjacent. The Laplacian (resp. signless Laplacian) adjacency matrix of a graph $G$ is the square matrix $L(G)=D(G)-A(G)$ (resp. $Q(G)=D(G)+A(G)$) where $D(G)=diag(deg(u_i):u_i\in V(G))$. The incidence matrix $B=(b_{ij})$ of an undirected graph $G$ is the $(0,1)$-matrix where its rows correspond to the vertices of $G$, columns correspond to the edges of $G$ and the entries $b_{ij}$ equal $1$ if and only if the vertex $u_i$ and the edge $e_j$ are incident. The adjacency matrix of a digraph $D$, denoted by $A(D)=(a_{ij})$, is the same as the adjacency matrix of a graph with $a_{ij}=1$ if there is an arc from the vertex $u_i$ to the vertex $u_j$. Which means, the adjacency matrix of a directed graph $D$ is symmetric if and only if all arcs of $D$ are both ways oriented arcs.\\
Studying eigenvalues and eigenvectors of an adjacency matrix of a graph is one of the most important topics in algebraic graph theory; especially considering its many applications in combinatorics, chemistry and theoretical computer science \cite{Liu} and \cite{God}. Mixed graphs can be considered as a generalization of graphs. Further, they have more applications than graphs. Even though research about spectrum of mixed graphs is rare. One of the reasons for such rareness is that its adjacency matrix is not symmetric. Which means some of its eigenvalues are complex. Mohar defined an interesting general Hermitian adjacency matrix of mixed graphs as follows~\cite{Mohar2019ANK} :
Let $X$ be a mixed graph with $n$ vertices and $\alpha$ be the unit complex number $e^{i\theta}$  . Then, the $\alpha$-Hermitian adjacency matrix of $X$ is an $n\times n$ matrix $H_{\alpha}(X)=(h_{ij})$, where
 
\[h_{ij} = \left\{
\begin{array}{ll}
1 & \text{if }  u_iv_j \in E(D),\\
\alpha &  \text{if }  \vec{u_iv_j} \in \vec{E}(D), \\
\overline{\alpha} &  \text{if }  \vec{v_ju_i} \in \vec{E}(D),\\
0 & \text{otherwise}.
\end{array}
\right.
\]
Mohar focused in his study on the $\alpha$-Hermitian adjacency matrix where $\alpha$ is the primitive sixth root of unity~\cite{Mohar2019ANK}. While in an earlier study Guo and Mohar considered $\alpha$ to be the complex number $i$ and demonstrated many interesting spectral properties of the adjacency matrix $H_i$ \cite{BMI}. Abudayah et al. considered $\alpha$ to be the primitive third root of unity $\gamma$ instead of the primitive sixth root of unity~\cite{Abudayah2}. In fact, since $\gamma^2=\overline{\gamma}$ they discovered many interesting spectral properties of $H_\gamma$. \\
On the other hand, the traditional adjacency matrix and Laplacian adjacency matrix of a graph was extensively studied in literature, see for example \cite{zhang2011laplacian}. However, research about signless Laplacian adjacency matrix was sparse. Even though it was proven that studying graphs by its signless Laplacian adjacency matrix spectra is more efficient than studying them by their (adjacency) spectra \cite{CVETKOVIC2007155}. Considering the definition of signless Laplacian adjacency matrix of graphs, in this work, we define and study the $\gamma$-signless Laplacian adjacency matrix of mixed graphs. Accordingly, we need the following definitions and theorems which can be found in \cite{Abudayah2}. 
\begin{definition}
Let $X$ be a mixed graph and $H_\alpha$ be its $\alpha$-Hermitian adjacency matrix. Then, 
\begin{itemize}
\item The spectrum of the $\alpha$-Hermitian adjacency matrix of the mixed graph $X$, denoted by $\sigma_\alpha(X)$, is called $\alpha$-spectrum of $X$.
\item For any walk $W=v_1,v_2,\dots,v_n$, the $\alpha$-weight of $W$ is defined by:\[h_\alpha(W)=h_{v_1v_2}h_{v_2v_3}\dots h_{v_{n-1}v_{n}}.\]
\item $X$ is called an $\alpha$-monostore graph if the $\alpha$-weight of each cycle in $X$ equals one.
\end{itemize}
\end{definition}
\begin{theorem}
Let $X$ be a connected mixed graph. If $X$ is $\alpha$-monostore graph, then 
\[\sigma_\alpha(X)=\sigma_\alpha(\Gamma(X)).\]
\end{theorem}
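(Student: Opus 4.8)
The plan is to show the two Hermitian adjacency matrices are related by a diagonal unitary similarity (a "switching" or "gauge" transformation), which preserves the spectrum. Concretely, let $X$ be a connected $\alpha$-monostore mixed graph and let $\Gamma(X)$ be its underlying graph. I would construct a diagonal matrix $D=\mathrm{diag}(d_{v_1},\dots,d_{v_n})$ with each $d_{v_k}$ a unit complex number, and prove that $H_\alpha(X)=D^{-1}H_\alpha(\Gamma(X))D$, where $H_\alpha(\Gamma(X))=A(\Gamma(X))$ is the ordinary $(0,1)$ adjacency matrix (since $\Gamma(X)$ has no arcs, every edge contributes $1$). Because $D$ is unitary, $H_\alpha(X)$ and $A(\Gamma(X))$ are similar and hence share the same spectrum, giving $\sigma_\alpha(X)=\sigma_\alpha(\Gamma(X))$.

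The key steps, in order, are as follows. First I would fix a spanning tree $T$ of the connected graph $\Gamma(X)$, pick a root $v_1$, and define the switching values $d_v$ by walking outward along $T$: set $d_{v_1}=1$, and whenever an edge of $T$ goes from an already-assigned vertex $u$ to a new vertex $v$, define $d_v$ so that $\overline{d_u}\, h_{uv}\, d_v = 1$, i.e. $d_v = d_u\,\overline{h_{uv}}$. Since each $h_{uv}$ is a unit complex number ($1$, $\alpha$, or $\overline\alpha$), each $d_v$ is a unit complex number and the construction is well defined along the tree. The entry-wise claim to verify is that $(D^{-1}H_\alpha(X)D)_{uv}=\overline{d_u}\,h_{uv}\,d_v$ equals $1$ exactly when $uv\in E(\Gamma(X))$ and $0$ otherwise; equivalently $\overline{d_u}h_{uv}d_v=1$ for every edge $uv$ of $X$.

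The main obstacle is verifying this last identity for the \emph{non-tree} edges, since by construction it holds automatically for tree edges. This is exactly where the $\alpha$-monostore hypothesis enters. For a non-tree edge $uv$, consider the unique cycle $C$ it forms together with the tree path from $u$ to $v$ in $T$. The $\alpha$-weight $h_\alpha(C)$ is a product of the $h$-values around $C$; telescoping the tree-path factors using the relations $\overline{d_w}h_{ww'}d_{w'}=1$ along $T$ shows that $h_\alpha(C)$ reduces to $\overline{d_u}\,h_{uv}\,d_v$ (up to the orientation/conjugation bookkeeping of traversing the cycle). Since $X$ is $\alpha$-monostore, $h_\alpha(C)=1$, forcing $\overline{d_u}h_{uv}d_v=1$ as required. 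I expect the delicate bookkeeping to be keeping track of conjugates consistently: traversing an edge $uv$ in the direction of an arc contributes $\alpha$, in the reverse direction $\overline\alpha$, and one must confirm the cycle-weight product telescopes correctly regardless of the orientations of the arcs and the direction in which the cycle is traversed.

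Finally, I would assemble these facts: the diagonal similarity by the unitary $D$ transforms $H_\alpha(X)$ into $A(\Gamma(X))=H_\alpha(\Gamma(X))$, similar matrices have equal spectra, and therefore $\sigma_\alpha(X)=\sigma_\alpha(\Gamma(X))$, completing the proof. It is worth noting that connectedness is used to guarantee the spanning tree reaches every vertex so that $D$ is fully defined; for a disconnected graph one would apply the argument componentwise.
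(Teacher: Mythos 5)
The paper does not prove this theorem: it is imported verbatim from the cited work of Abudayah et al.\ (the sentence preceding it reads ``we need the following definitions and theorems which can be found in [that reference]''), so there is no in-paper proof to compare against. That said, your switching argument is the standard and correct route, and the bookkeeping you flag as delicate does close up: with $d_v=d_u\overline{h_{uv}}$ along tree edges the relation $\overline{d_u}h_{uv}d_v=1$ is self-conjugate under swapping $u$ and $v$ (because $h_{vu}=\overline{h_{uv}}$), so it holds in both traversal directions; along the tree path of a fundamental cycle the factors telescope to $d_u\overline{d_v}$, and $h_\alpha(C)=1$ then forces $\overline{d_v}h_{vu}d_u=1$ for the non-tree edge. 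Since $\Gamma(X)$ has only digons, $H_\alpha(\Gamma(X))=A(\Gamma(X))$, and the unitary diagonal similarity gives equality of spectra. The only point worth making explicit in a final write-up is that zero entries and diagonal entries are unaffected by the conjugation, so $D^{-1}H_\alpha(X)D$ really equals $A(\Gamma(X))$ entrywise and not merely on the support of the edge set.
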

\section{The $\gamma$-Incidence Matrix of Mixed Graphs}
In this section, our aim is to define an incidence matrix for mixed graphs. 
Let $X$ be a mixed graph, with $n=|V(X)|$, $m=|E(\Gamma(X))|$ and $\gamma$ be the primitive third root of unity $e^{\frac{2\pi}{3}i}$. Define the $\gamma$-incidence matrix of the mixed graph $X$, to be the $n\times m$ matrix $B_\gamma=[b_{ij}]$, where 
\[b_{ij} = \left\{
\begin{array}{ll}
1 & \text{if the vertex } v_i \text{ incident to the digon }   e_j \in E(X),\\
\gamma &  \text{if the vertex } v_i \text{ is the terminal vertex of the arc }   e_j \in \vec{E}(X), \\
\gamma^2 &  \text{if the vertex } v_i \text{ is the initial vertex of the arc }   e_j \in \vec{E}(X),\\
0 & \text{otherwise}.
\end{array}
\right.
\]
The above definition is consistent with the graphs incidence matrix definition. Another aspect of consistency is that since the structure of $\gamma$-incidence matrix of a mixed graph $X$ is similar to the structure of the incidence matrix of its underlying graph $\Gamma(X)$, and $\{1,\gamma,\gamma^2\}$ is closed under multiplication, once can easily observe the following:
\begin{observation}\label{ob1}
Let $X$ be a mixed graph, $H_\gamma(X)$ be its $\gamma$-Hermitian adjacency matrix and $B_\gamma(X)$ be its $\gamma$-incidence matrix. Then,
\begin{itemize}
\item $B_\gamma(\Gamma(X))=B(\Gamma(X))$.
\item $B_\gamma B_\gamma^*=D+H_\gamma(X)$.
\item $B_\gamma^*B_\gamma =2I+H_\gamma(\mathbb{X})$.
\end{itemize}
where $D=D(X)$, $I$ is the identity matrix and $H_\gamma(\mathbb{X})$ is the $\gamma$-adjacency matrix of a mixed graph $\mathbb{X}$.
\end{observation}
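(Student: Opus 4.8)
The plan is to verify all three identities by direct entrywise computation, using repeatedly that $\{1,\gamma,\gamma^2\}$ is a multiplicative group closed under conjugation, with $\overline{\gamma}=\gamma^2$, $\overline{\gamma^2}=\gamma$, $\gamma^3=1$, and that every nonzero incidence value has unit modulus. The first bullet is immediate from the definitions: passing from $X$ to $\Gamma(X)$ turns every arc into a digon, so every nonzero entry of $B_\gamma(\Gamma(X))$ is the digon value $1$, which is precisely the ordinary $(0,1)$-incidence matrix $B(\Gamma(X))$.

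For the second bullet I would write $(B_\gamma B_\gamma^*)_{ik}=\sum_j b_{ij}\overline{b_{kj}}$, the sum running over the edges $e_j$. On the diagonal $(i=k)$ every edge incident to $v_i$ contributes $|b_{ij}|^2=1$, so the entry equals $\deg(v_i)$ and the diagonal reproduces $D$. Off the diagonal $(i\neq k)$ a term survives only when a single edge $e_j$ joins $v_i$ and $v_k$, and I would then match the three edge types against the definition of $H_\gamma$: a digon gives $1\cdot\overline{1}=1$; an arc $\vec{v_iv_k}$ gives $b_{ij}\overline{b_{kj}}=\gamma^2\cdot\overline{\gamma}=\gamma^2\gamma^2=\gamma$; and an arc $\vec{v_kv_i}$ gives $\gamma\cdot\overline{\gamma^2}=\gamma\gamma=\gamma^2$. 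These coincide with $h_{ik}=1,\gamma,\overline{\gamma}$ respectively, so the off-diagonal part is exactly $H_\gamma(X)$ and $B_\gamma B_\gamma^*=D+H_\gamma(X)$.

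For the third bullet I would compute $(B_\gamma^*B_\gamma)_{jl}=\sum_i \overline{b_{ij}} b_{il}$, now indexed by the edges $e_j,e_l$. Since each edge has exactly two endpoints, the diagonal entry is $\sum_i|b_{ij}|^2=2$, which produces the $2I$ term. An off-diagonal term is nonzero precisely when some vertex $v_i$ is incident to both $e_j$ and $e_l$, that is, when $e_j$ and $e_l$ are adjacent in the line graph, and in that case the value $\overline{b_{ij}}b_{il}$ is a product of elements of $\{1,\gamma,\gamma^2\}$, hence again lies in $\{1,\gamma,\gamma^2\}=\{1,\gamma,\overline{\gamma}\}$. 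Because $B_\gamma^*B_\gamma$ is automatically Hermitian, the matrix $B_\gamma^*B_\gamma-2I$ is Hermitian with off-diagonal entries in $\{0,1,\gamma,\overline{\gamma}\}$, which is exactly the form of a $\gamma$-Hermitian adjacency matrix; the mixed graph it represents is the generalized line graph $\mathbb{X}$ built on the edge set of $X$.

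The case analysis itself is routine; the genuine care is needed in the third identity, where I must confirm that the shared-vertex value $\overline{b_{ij}}b_{il}$ always falls in $\{1,\gamma,\gamma^2\}$ and is compatible with the Hermitian arc--digon dictionary, so that $B_\gamma^*B_\gamma-2I$ really is the $\gamma$-Hermitian adjacency matrix of a well-defined mixed graph rather than merely an arbitrary Hermitian matrix. Fixing the orientation convention on $\mathbb{X}$ consistently, so that an entry $\gamma$ corresponds to an arc and $\overline{\gamma}$ to its reverse, is the main point to get right.
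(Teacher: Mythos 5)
Your proposal is correct and matches the paper's intent: the paper gives no written proof, merely noting that the $\gamma$-incidence matrix has the same support as the ordinary incidence matrix and that $\{1,\gamma,\gamma^2\}$ is closed under multiplication, and your entrywise computation (including the checks $\gamma^2\overline{\gamma}=\gamma$ and $\gamma\overline{\gamma^2}=\overline{\gamma}$ against the arc convention of $H_\gamma$) is exactly the verification being gestured at.
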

It should be mentioned here that the underlying graph of the mixed graph $\mathbb{X}$ is the line graph of the underlying graph of $X$. The following definition paves the way to clarify the structure of the mixed graph $\mathbb{X}$, see Figure \ref{fig1}.

\begin{definition}
For a mixed graph $X$, we define the algebraic line mixed graph of the mixed graph $X$, denoted by $AL_X$, to be the mixed graph whose vertex set $E(X)\cup \vec{E}(X)$ and a set of arcs and digons as follows:
\begin{itemize}
\item An arc from the arc $e_i$ to the arc $e_j$, if the terminal vertex of $e_i$ is the initial vertex of the arc
$e_j$.
\item An arc from the digon $e_i$ to the arc $e_j$ if the terminal vertex $e_j$ is an end vertex of the digon $e_i$.
\item An arc from the arc $e_j$ to the digon $e_i$ if the initial vertex of $e_j$ is an end vertex of the digon $e_i$.
\item A digon between the arcs $e_i$ and $e_j$ if $e_i$ and $e_j$ have the same initial vertex or end vertex.
\item A digon between the digons $e_i$ and $e_j$ if $e_i$ and $e_j$ have common vertex.
\end{itemize}
\end{definition} 

Using the above definition together with Observation \ref{ob1}, one can easily check that $AL_X=\mathbb{X}$. Thus,
\begin{equation}\label{5}
B_\gamma^*B_\gamma=2I + H_\gamma(AL_X).
\end{equation}
Furthermore, it is obvious that the algebraic line mixed graph $AL_X$ of a mixed graph $X$ satisfies the following:
\begin{equation}\label{6}
\Gamma(AL_X)=L_{\Gamma(X)}.
\end{equation}

\begin{figure}[H] 
\begin{subfigure}{1\textwidth}
  \centering
  \includegraphics[width=.5\linewidth]{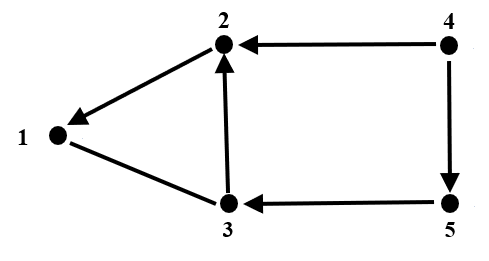}  
  \caption{The mixed graph $X$.}
  \label{fig:sub-first}
\end{subfigure}
\begin{subfigure}{1\textwidth}
 \centering
  \includegraphics[width=.5\linewidth]{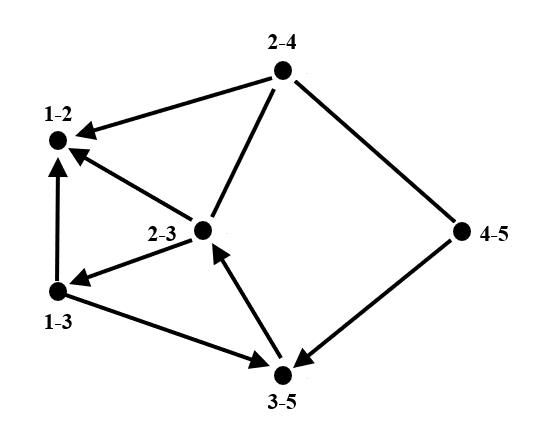}  
  \caption{The Algebraic Line mixed graph $AL_X$.}
  \label{fig:sub-second}
\end{subfigure}
\caption{ }
\label{fig1}
\end{figure}
Similar to the definition of signless Laplacian adjacency matrix of unoriented graphs, we define $\gamma$-signless Laplacian adjacency matrix of mixed graphs as follows: Let $X$ be a mixed graph, $D=diag\{v_1,v_2,\dots,v_n\}$ and $H_\gamma$ be its Hermitian adjacency matrix then we define the signless Laplacian adjacency matrix of $X$ by: \[Q_\gamma=B_\gamma B_\gamma^* = D+H_\gamma.\]

\section{The $\gamma$-Signless Laplacian Adjacency Matrix}

Let $X$ be a mixed graph and $Q_\gamma$ be its  signless Laplacian adjacency matrix. Then, the characteristic polynomial of the $\gamma$-signless Laplacian (resp. Hermitian) adjacency matrix  of $X$ will be denoted by $\chi_Q(X,\lambda)$ (resp. $\chi_H(X,\mu)$). The eigenvalues of the matrix $Q_\gamma$ (resp. $H_\gamma$) are called the $Q$-eigenvalues (resp. $H$-eigenvalues) of $X$. The $Q$-eigenvalues and $H$-eigenvalues (which are all real) of the mixed graph $X$ will be denoted by $\lambda_1\ge \lambda_2\ge \dots \ge \lambda_n$ and  $\mu_1\ge \mu_2\ge \dots \ge \mu_n$ respectively. \\
Now, since $Q_\gamma=B_\gamma B_\gamma^*$ is positive semidefinite, all $Q$-eigenvalues of $X$ should be non-negative. Also, since the non-zero eigenvalues of $B_\gamma B_\gamma^*$ and $B_\gamma^* B_\gamma$ are the same, using Equations \ref{5} and \ref{6}, one can immediately obtain the following:
\begin{equation}
\chi_H(AL_X,\lambda)=(\lambda+2)^{m-n}\chi_Q(X,\lambda+2),
\end{equation}
where, $m=|E(X)\cup \vec{E}(X)|$ and $n=|V(X)|$. \\
Since the diagonal entries of the $\gamma-$signless Laplacian adjacency matrix of a mixed graph $X$ are the degrees of the vertices of $X$, we get $tr(Q_\gamma)=2m$, where $m$ is the number of arcs and digons in $X$. Therefore, \[ \sum_{i=1}^n\lambda_i=2m.\]
\begin{theorem}\label{lambda2}
Let $X$ be a mixed graph with $|V(X)|=n$, $|\vec{E}(X)\cup E(X)|=m$ and $Q_\gamma$ be its signless Laplacian adjacency matrix. Then,
\[\sum_{i=1}^n\lambda_i^2=2m+\sum_{uv\in \vec{E}(X)\cup E(X)}(deg(u)+deg(v)).
\]
\end{theorem}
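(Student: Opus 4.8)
The plan is to evaluate $\sum_{i=1}^n\lambda_i^2$ as $tr(Q_\gamma^2)$, exploit the splitting $Q_\gamma = D + H_\gamma$, and then convert the resulting vertex-degree sum into the claimed edge sum by a double-counting argument. Since $Q_\gamma$ is Hermitian with real spectrum, $\sum_{i=1}^n\lambda_i^2 = tr(Q_\gamma^2)$ holds automatically, so the whole computation reduces to tracing a product of known matrices.

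First I would expand $Q_\gamma^2 = (D+H_\gamma)^2 = D^2 + DH_\gamma + H_\gamma D + H_\gamma^2$ and take the trace term by term. Because $X$ has no loops, the $\gamma$-Hermitian adjacency matrix $H_\gamma$ has zero diagonal; consequently both $DH_\gamma$ and $H_\gamma D$ have vanishing diagonal entries and contribute nothing to the trace. This collapses the computation to $tr(Q_\gamma^2) = tr(D^2) + tr(H_\gamma^2)$, where the first term is immediately $\sum_{i=1}^n deg(v_i)^2$ since $D = diag(deg(v_i))$.

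For $tr(H_\gamma^2)$ I would use that $H_\gamma$ is Hermitian, so $(H_\gamma^2)_{ii} = \sum_j (h_{ij})(h_{ji}) = \sum_j |h_{ij}|^2$. The key observation is that every nonzero off-diagonal entry of $H_\gamma$ is one of $1,\gamma,\gamma^2$, each of modulus one; hence $\sum_j |h_{ij}|^2$ merely counts the neighbours of $v_i$ and equals $deg(v_i)$. Summing over $i$ gives $tr(H_\gamma^2) = \sum_i deg(v_i) = 2m$, and therefore $\sum_{i=1}^n\lambda_i^2 = 2m + \sum_{i=1}^n deg(v_i)^2$.

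It remains to rewrite $\sum_i deg(v_i)^2$ as the edge sum. Here I would double count: each vertex $v$ is incident with exactly $deg(v)$ members of $E(X)\cup\vec{E}(X)$, and in each incident edge $uv$ the quantity $deg(u)+deg(v)$ contributes the summand $deg(v)$ attributable to the endpoint $v$; thus $v$ contributes $deg(v)\cdot deg(v)$ in total, giving $\sum_{uv\in\vec{E}(X)\cup E(X)}(deg(u)+deg(v)) = \sum_{i=1}^n deg(v_i)^2$. Combining this with the trace computation yields the stated identity. There is no real obstacle in this argument; the only points demanding care are the vanishing of the diagonal of $H_\gamma$ (so that the cross terms drop out) and the modulus-one property of $\gamma$ and $\gamma^2$ (so that $tr(H_\gamma^2)$ reduces to a degree sum rather than involving the arc/digon distinction).
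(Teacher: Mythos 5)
Your proof is correct, but it takes a genuinely different route from the paper. You work entirely on the vertex side: expanding $Q_\gamma^2=(D+H_\gamma)^2$, killing the cross terms via the zero diagonal of $H_\gamma$, computing $tr(H_\gamma^2)=\sum_{i,j}|h_{ij}|^2=2m$ from the unimodularity of $1,\gamma,\gamma^2$, and converting $\sum_v deg(v)^2$ into the edge sum by double counting. The paper instead passes to the line-graph side: it uses the cyclic invariance $tr\left((B_\gamma B_\gamma^*)^2\right)=tr\left((B_\gamma^* B_\gamma)^2\right)$ together with the identity $B_\gamma^*B_\gamma=2I+H_\gamma(AL_X)$, obtaining $4m+\sum_{e\in V(AL_X)}deg_{AL_X}(e)$ and then invoking the standard line-graph degree formula $deg_{AL_X}(uv)=deg(u)+deg(v)-2$. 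Your argument is more elementary and self-contained, needing only the two structural facts about $H_\gamma$ that you correctly flag (zero diagonal and modulus-one entries), and it would apply verbatim to $H_\alpha$ for any unimodular $\alpha$; the paper's argument is less direct but reinforces the incidence-matrix and algebraic-line-graph machinery that the rest of the paper is built around. Both yield the same identity, and each step of yours checks out.
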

\begin{proof}
Observing that, $\sum_{i=1}^n\lambda^2=tr(Q_\gamma^2)$ we get,
\begin{align}
\sum_{i=1}^n{\lambda_i^2}&=tr\left((B_\gamma^*B_\gamma)^2\right)\\
&=tr\left((2I+H_\gamma(AL_X))^2\right)\\
&=4m+\sum_{e\in V(AL_X)}deg_{AL_X}(e)\\
&=4m+\sum_{uv\in \vec{E}(X)\cup E(X)}\left(deg(u)+deg(v)-2\right)\\
&=2m+\sum_{uv\in \vec{E}(X)\cup E(X)}\left(deg(u)+deg(v)\right),
\end{align}
where $deg_{AL_X}(e)$ is the degree of the vertex $e$ in $AL_X$.
\end{proof}\\
For a connected graph $G$, it is well known that the signless Laplacian adjacency matrix is singular if and only if $G$ is a bipartite graph \cite{CVETKOVIC2007155}. The following is a generalization of this theorem.
\begin{theorem}
Let $X$ be a mixed graph and $Q_\gamma$ be its signless Laplacian adjacency matrix. Then, $Q_\gamma$ is singular if and only if $X$ is a $\gamma$-monostore graph and $\Gamma(X)$ is bipartite.
\end{theorem}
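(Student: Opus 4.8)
The plan is to exploit that $Q_\gamma = B_\gamma B_\gamma^{*}$ is positive semidefinite, so that for every $x\in\mathbb{C}^{n}$ we have $x^{*}Q_\gamma x = \|B_\gamma^{*}x\|^{2}\ge 0$; hence $Q_\gamma$ is singular if and only if there is a nonzero $x$ with $B_\gamma^{*}x=0$. I would then read the equation $B_\gamma^{*}x=0$ one column at a time. Each column of $B_\gamma$ corresponds to an edge and has exactly two nonzero entries, so every edge of $X$ contributes a single scalar equation in the values of $x$ at its endpoints: a digon $uv$ forces $x_u+x_v=0$, and an arc $\vec{uv}$ forces $\gamma x_u+\gamma^{2}x_v=0$. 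Thus singularity of $Q_\gamma$ is equivalent to solvability of this homogeneous edge-relation system by a nonzero vector. I take $X$ connected, as in Theorem~1.

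Rewriting each relation as $x_u=f_{uv}\,x_v$, the coefficient $f_{uv}$ is always unimodular, with $f_{uv}\in\{-1,-\gamma,-\gamma^{2}\}$ according to whether the edge is a digon or an arc and how it is oriented. Two consequences follow immediately on a connected graph: a nonzero solution has all coordinates of equal modulus, and therefore \emph{all} coordinates are nonzero (a single vanishing coordinate would propagate to the whole graph). Now fix any cycle $C=v_1v_2\cdots v_kv_1$ of length $k$ and propagate the relations once around it; since $x_{v_1}\neq 0$, the accumulated factor must be $1$. A direct computation shows this accumulated factor equals $(-1)^{k}\,\overline{h_\gamma(C)}$, where $h_\gamma(C)$ is the $\gamma$-weight of $C$, so the consistency condition is
\[
(-1)^{k}\,\overline{h_\gamma(C)} = 1 .
\]

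The equivalence then falls out of a single arithmetic fact: each edge weight $h_{uv}$ lies in $\{1,\gamma,\gamma^{2}\}$, so $\overline{h_\gamma(C)}$ is a cube root of unity, and $-1$ is \emph{not} a cube root of unity. For the forward direction, singularity makes the displayed identity hold for every cycle $C$; if some $C$ had odd length the identity would force $\overline{h_\gamma(C)}=-1$, which is impossible, so every cycle is even and $\Gamma(X)$ is bipartite, whereupon $(-1)^{k}=1$ gives $h_\gamma(C)=1$ for every cycle, i.e.\ $X$ is $\gamma$-monostore. Conversely, if $X$ is $\gamma$-monostore and $\Gamma(X)$ is bipartite then $(-1)^{k}\overline{h_\gamma(C)}=1$ holds for every $C$; I would fix a value at a root, propagate the relations along a spanning tree to define $x$, and observe that the only obstructions to this being a genuine solution are exactly the fundamental-cycle conditions, all of which now hold. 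This produces a nonzero vector in $\ker B_\gamma^{*}$, so $Q_\gamma$ is singular.

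I expect the main obstacle to be the bookkeeping in the monodromy step: computing $f_{uv}$ correctly from the $\gamma$-incidence entries together with the conjugation in $B_\gamma^{*}$, and checking that the per-edge factors multiply around a cycle to precisely $(-1)^{k}\overline{h_\gamma(C)}$ (in particular that the reversed traversals of shared edges cancel, so that consistency on the fundamental cycles suffices in the converse). Once that identity is secured, the clean separation between the parity factor $(-1)^{k}$ and the cube-root-of-unity weight $\overline{h_\gamma(C)}$ makes both implications immediate.
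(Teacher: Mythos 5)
Your proposal is correct and follows essentially the same route as the paper: both reduce singularity of $Q_\gamma$ to the existence of a nonzero vector in $\ker B_\gamma^{*}$, translate that kernel condition into per-edge relations of the form $x_v=-\overline{h_{uv}}\,x_u$, and derive the cycle consistency condition $(-1)^{|C|}\,\overline{h_\gamma(C)}=1$, which forces every cycle to be even and of unit $\gamma$-weight since $-1$ is not a cube root of unity. Your write-up is in fact slightly more complete than the paper's, as you supply the spanning-tree construction for the converse direction (which the paper leaves implicit in one sentence) and you correctly carry the conjugation coming from $B_\gamma^{*}$.
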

\begin{proof}
Suppose that $x=[x_u]_{u\in V(X)}$ be a non-zero vector, with $Q_\gamma x=0$. Then, $B_\gamma^* x=0$ and thus, for every arc $\vec{uv}$ or digon $uv$ in $X$ we have, $x_v=-x_uh_{uv}$. Now, let $C=u_1u_2 \dots u_{k-1}u_1$ be a cycle in $X$. Then, 
\begin{align}\label{mono}
x_{u_1} &=(-1)^kh_{u_1u_2}h_{u_2u_3}\dots h_{u_{k-1}u_1}x_{u_1}\\
            &=(-1)^kh_\gamma(C)x_{u_1}.
  \end{align}
Observing that $X$ is weakly connected graph, $x_{u_1}\ne 0$. Therefore,\[ 1=(-1)^kh_\gamma(C).\]
The fact that this equation is true  if and only if $h_\gamma(C)=1$ and $k$ is even concludes the proof.
\end{proof}\\
For a mixed graph $X$ it has been proven that spectral radius of the $\gamma$-Hermitian adjacency matrix $H_\gamma$ of $X$ is less than the maximum degree of the vertices of $X$. To be more formal,
\begin{equation}\label{spec}
\max_{i}|\mu_i| \le \max_{u\in V(X)}\{deg(u)\}.
\end{equation}
 An immediate consequence of this result is that, the largest $Q$-eigenvalue of $X$ should be less than twice the maximum degree of the vertices of $X$. The following theorem gives a sharper upper bound of the largest $Q$-eigenvalues of $X$.
\begin{theorem}
Let $X$ be a mixed graph and $Q_\gamma$ be its signless Laplacian adjacency matrix. Then,
\[ \displaystyle \max_{u\in V(X)}\left\lbrace deg(u)  \right\rbrace  \le \lambda_1(X) \le  \displaystyle \max_{uv\in \vec{E}(X)\cup E(X)}\left\lbrace deg(u)+deg(v) \right\rbrace.
\]
\end{theorem}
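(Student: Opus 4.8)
The plan is to establish the two inequalities separately, handling the easy lower bound first and then the upper bound by transferring the problem to the algebraic line mixed graph $AL_X$ already developed above.

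For the lower bound I would use the variational (Rayleigh) characterization of the largest eigenvalue of the Hermitian positive semidefinite matrix $Q_\gamma$, namely $\lambda_1(X)=\max_{x\neq 0}\frac{x^*Q_\gamma x}{x^*x}$. Since $Q_\gamma=D+H_\gamma$ and the diagonal of $H_\gamma$ is zero, the diagonal entries of $Q_\gamma$ are exactly the degrees $deg(u)$. Evaluating the quotient at the standard basis vector $e_u$ supported on a vertex $u$ of maximum degree gives $e_u^*Q_\gamma e_u = deg(u)$, whence $\lambda_1(X)\ge deg(u)$ for every $u$, and so $\lambda_1(X)\ge \max_{u\in V(X)} deg(u)$. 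There is no obstacle here.

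For the upper bound the key is the spectral correspondence between $X$ and $AL_X$. Assuming $X$ has at least one edge (the edgeless case is trivial, both bounds being $0$), $tr(Q_\gamma)=2m>0$ together with positive semidefiniteness forces $\lambda_1(X)>0$. Because the nonzero eigenvalues of $B_\gamma B_\gamma^*=Q_\gamma$ and of $B_\gamma^*B_\gamma$ coincide, and $B_\gamma^*B_\gamma=2I+H_\gamma(AL_X)$ by Equation \ref{5}, I would argue that $\lambda_1(X)$ equals the largest eigenvalue of $2I+H_\gamma(AL_X)$, which is $2+\mu_1(AL_X)$, where $\mu_1(AL_X)$ is the largest $H$-eigenvalue of $AL_X$. \textbf{This eigenvalue-transfer step is the point to argue carefully:} the largest eigenvalue of $B_\gamma^*B_\gamma$ is $2+\mu_1(AL_X)\ge 2>0$, so it is a genuinely nonzero eigenvalue and hence shared with $Q_\gamma$; combined with $\lambda_1(X)>0$ this yields the identity $\lambda_1(X)=2+\mu_1(AL_X)$.

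It then remains to bound $\mu_1(AL_X)$, which is where the earlier tools pay off. Applying the spectral radius inequality \ref{spec} to the mixed graph $AL_X$ gives $\mu_1(AL_X)\le \max_{e\in V(AL_X)} deg_{AL_X}(e)$. By Equation \ref{6}, $\Gamma(AL_X)=L_{\Gamma(X)}$ is the line graph of $\Gamma(X)$, so for a vertex $e=uv$ of $AL_X$ (an arc or digon of $X$) the line-graph degree formula gives $deg_{AL_X}(e)=deg(u)+deg(v)-2$, exactly the identity already used in the proof of Theorem \ref{lambda2}. Substituting yields $\mu_1(AL_X)\le \max_{uv\in \vec{E}(X)\cup E(X)}(deg(u)+deg(v)-2)$, and therefore $\lambda_1(X)=2+\mu_1(AL_X)\le \max_{uv\in \vec{E}(X)\cup E(X)}(deg(u)+deg(v))$, completing the argument. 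Everything except the transfer step is a direct invocation of results established earlier in the paper.
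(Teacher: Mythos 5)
Your proposal is correct and follows essentially the same route as the paper: the lower bound via the Rayleigh quotient at a standard basis vector, and the upper bound via the identity $\lambda_1(X)=2+\mu_1(AL_X)$ combined with inequality \ref{spec} and the line-graph degree formula $deg_{AL_X}(uv)=deg(u)+deg(v)-2$. Your extra care with the eigenvalue-transfer step (checking that the relevant eigenvalues are nonzero) is a welcome addition that the paper's one-line justification glosses over.
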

\begin{proof}
Observing that the non-zero eigenvalues of $B_\gamma B_\gamma^*$ and $B_\gamma^* B_\gamma$ are the same, $\lambda_1(X)=\mu_1(AL_X)+2$. Using inequality \ref{spec} we have 
\begin{align}
\mu_1 &\le \displaystyle \max_{e\in V(AL_X)}\left\lbrace deg(e) \right\rbrace\\
            &=\displaystyle \max_{uv\in \vec{E}(X)\cup E(X)}\left\lbrace deg(u)+deg(v)-2 \right\rbrace.
  \end{align}
Therefore, $\lambda_1 \le \displaystyle \max_{uv\in \vec{E}(X)\cup E(X)}\left\lbrace deg(u)+deg(v) \right\rbrace$. \\
For the other side, observing that for every vector $x\in \mathbb{C}^n$ with $||x||=1$ we get $x^*Q_\gamma x \le \lambda_1$. Therefore, for every vertex $u$ of $X$ we have $deg(u) \le \lambda_1$.
\end{proof}\\

Now, let $M$ be a Hermitian matrix. Then, the spectral theorem says that $M$ has a set of orthonormal basis $\{x_i\}_{i=1}^{n}$ of its eigenvectors. Which means that there is a unitary matrix $U$ such that \[U^*MU = diag\{\lambda_1,\lambda_2,\dots,\lambda_n\}.\]
Therefore,  
\begin{equation}\label{diag}
M=\sum_{i=1}^n \lambda_i P_i,
\end{equation}
where $P_i=x_ix_i^*$. It is obvious here that the matrix $P_i$ is a Hermitian idempotent matrix, that is $P_i=P_i^*=P_i^2$. Moreover, since $UU^*=I$, we have $\sum_{i=1}^nP_i=I$. Which means,
\begin{align}
  \sum_{i=1}^n(P_i)_{rs}=0,  \hspace{25 pt} \text{         if $r\ne s$} \label{1}\\
  \sum_{i=1}^n(P_i)_{rs}=1, \hspace{25 pt} \text{         if $r = s$} \label{2}
\end{align}
Also, since $\sum_{i, j}\overline{x_i}x_j=\sum_{i=1}^n\overline{x_i}\sum_{j=1}^n x_j$ and $\sum_{i=1, i\ne j}^n|x_i|^2|x_j|^2=|x_j|^2(1-|x_j|^2)$, we can easily observe the following theorem:
\begin{theorem}\label{vec}
For any vector  $x=[x_1,x_2,\dots,x_n]^T$, if $x^*x=1$, the following holds:
\begin{align}
 \sum_{i, j}\overline{x_i}x_j\ge 0\label{3}\\
  \sum_{i=1, i\ne j}^n|x_i|^2|x_j|^2\le \frac{1}{4} \label{4}
  \end{align}
\end{theorem}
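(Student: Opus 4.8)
The plan is to treat the two inequalities separately, since each reduces to an elementary identity already flagged in the sentence preceding the statement.

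For inequality~\eqref{3} I would start from the factorization of the double sum,
\[
\sum_{i,j}\overline{x_i}x_j=\Big(\sum_{i=1}^n\overline{x_i}\Big)\Big(\sum_{j=1}^n x_j\Big),
\]
which holds because the indices $i$ and $j$ range independently, so the double sum separates. Writing $S=\sum_{j=1}^n x_j$, the first factor is precisely $\overline{S}$, so the whole expression equals $\overline{S}\,S=|S|^2\ge 0$. Notice that the normalization $x^*x=1$ plays no role in this part; it is simply the observation that a Hermitian square is nonnegative.

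For inequality~\eqref{4} I would read the sum with $j$ held fixed and $i$ ranging over the remaining indices, then pull the constant factor $|x_j|^2$ outside:
\[
\sum_{\substack{i=1\\ i\ne j}}^n|x_i|^2|x_j|^2=|x_j|^2\sum_{\substack{i=1\\ i\ne j}}^n|x_i|^2.
\]
This is where the hypothesis enters: from $x^*x=\sum_{i=1}^n|x_i|^2=1$ we get $\sum_{i\ne j}|x_i|^2=1-|x_j|^2$, so the expression equals $|x_j|^2\big(1-|x_j|^2\big)$. Setting $t=|x_j|^2$, normalization forces $0\le t\le 1$, and I would finish with the elementary bound $t(1-t)\le\tfrac14$ on $[0,1]$ (equality at $t=\tfrac12$), i.e.\ the fact that the parabola $t\mapsto t(1-t)$ attains its maximum at its vertex.

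I do not expect any genuine obstacle here, as both claims are one-line consequences of the identities supplied just before the statement. The only point that requires care is the correct parsing of~\eqref{4}: the index $j$ is a free, fixed parameter rather than a second summation variable, so the asserted bound is a statement made separately for each $j$. Once that reading is fixed, the AM--GM (vertex-of-parabola) step closes the argument immediately.
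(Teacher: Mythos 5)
Your proposal is correct and follows exactly the route the paper intends: the paper's justification is precisely the two identities it states in the sentence before the theorem (the factorization $\sum_{i,j}\overline{x_i}x_j=\bigl(\sum_i\overline{x_i}\bigr)\bigl(\sum_j x_j\bigr)=|S|^2$ and the reduction to $|x_j|^2(1-|x_j|^2)\le\tfrac14$), which you have simply spelled out with the missing details, including the correct reading of $j$ as a fixed index.
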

Now, we are ready to give some upper and lower bound of the extreme eigenvalues of the $\gamma$-signless Laplacian adjacency matrix of mixed graphs.
\begin{theorem}\label{ineq1}
Let $X$ be a mixed graph, $n=|V(X)|$, $m=|E(\Gamma(X))|$ and $Q_\gamma$ its signless Laplacian adjacency matrix. Then, 
\begin{equation}
\frac{n}{2}\lambda_n+1 \le m \le \frac{n}{2}\lambda_1-1.
\end{equation}
\end{theorem}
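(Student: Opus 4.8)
The plan is to extract both inequalities from a \emph{single} off-diagonal entry of $Q_\gamma$, exploiting that every nonzero off-diagonal entry of $Q_\gamma=D+H_\gamma$ has modulus exactly $1$. Fix any arc or digon $uv$ of $X$, so that $(Q_\gamma)_{uv}=h_{uv}$ with $|h_{uv}|=1$; this is the only place I need the (implicit) hypothesis that $X$ has at least one edge. Writing $Q_\gamma$ in its spectral form \ref{diag}, namely $Q_\gamma=\sum_{i=1}^n\lambda_iP_i$ with $P_i=x_ix_i^*$ for an orthonormal eigenbasis, reduces everything to estimating the numbers $(P_i)_{uv}=(x_i)_u\overline{(x_i)_v}$, together with the trace identity $\sum_{i=1}^n\lambda_i=2m$.

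For the upper bound I would start from $h_{uv}=\sum_{i=1}^n\lambda_i(P_i)_{uv}$ and use $\sum_{i=1}^n(P_i)_{uv}=0$ from \ref{1} (valid since $u\neq v$) to subtract $\lambda_1\sum_i(P_i)_{uv}=0$, obtaining $h_{uv}=\sum_{i=2}^n(\lambda_i-\lambda_1)(P_i)_{uv}$, where the $i=1$ summand drops out. Taking moduli and applying the triangle inequality gives $1=|h_{uv}|\le\sum_{i=2}^n(\lambda_1-\lambda_i)\,|(P_i)_{uv}|$, using $\lambda_1\ge\lambda_i$. The crucial step is the bound $|(P_i)_{uv}|=|(x_i)_u|\,|(x_i)_v|\le\tfrac12$, which is precisely inequality \ref{4} of Theorem \ref{vec} applied to the unit eigenvector $x_i$ (it yields $|(x_i)_u|^2|(x_i)_v|^2\le\tfrac14$). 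Since each $\lambda_1-\lambda_i\ge0$, this gives $1\le\tfrac12\sum_{i=2}^n(\lambda_1-\lambda_i)$. Finally I would rewrite the right-hand side: because the $i=1$ term vanishes, $\sum_{i=2}^n(\lambda_1-\lambda_i)=n\lambda_1-\sum_i\lambda_i=n\lambda_1-2m$, so $1\le\tfrac12(n\lambda_1-2m)$, i.e. $m\le\tfrac{n}{2}\lambda_1-1$.

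The lower bound is entirely symmetric, with $\lambda_n$ playing the role of $\lambda_1$. Subtracting $\lambda_n\sum_i(P_i)_{uv}=0$ instead gives $h_{uv}=\sum_{i=1}^{n-1}(\lambda_i-\lambda_n)(P_i)_{uv}$, and the identical modulus estimate together with $|(P_i)_{uv}|\le\tfrac12$ produces $1\le\tfrac12\sum_{i=1}^{n-1}(\lambda_i-\lambda_n)$. Since $\sum_{i=1}^{n}(\lambda_i-\lambda_n)=2m-n\lambda_n$ with the $i=n$ term zero, this reads $1\le\tfrac12(2m-n\lambda_n)$, i.e. $m\ge\tfrac{n}{2}\lambda_n+1$, completing the chain $\tfrac{n}{2}\lambda_n+1\le m\le\tfrac{n}{2}\lambda_1-1$.

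I expect the only genuine obstacle to be conceptual rather than computational: recognizing that a single unit-modulus off-diagonal entry, passed through the resolution of the identity $\sum_iP_i=I$, already encodes the full spectral spreads $\sum_{i\ge2}(\lambda_1-\lambda_i)$ and $\sum_{i\le n-1}(\lambda_i-\lambda_n)$, and that Theorem \ref{vec} is exactly the device that caps each projector entry by $\tfrac12$. Everything after that is the trace identity and sign bookkeeping; the one point to keep in mind is that $X$ must contain at least one arc or digon for the chosen entry to have modulus $1$.
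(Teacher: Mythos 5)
Your proof is correct and takes essentially the same route as the paper's: the spectral decomposition $Q_\gamma=\sum_{i=1}^n\lambda_iP_i$, the resolution of the identity giving $\sum_{i=1}^n(P_i)_{uv}=0$ for $u\ne v$, the bound $|(P_i)_{uv}|\le\frac{1}{2}$ coming from Theorem \ref{vec}, and the trace identity $\sum_{i=1}^n\lambda_i=2m$. The only difference is in the final assembly: the paper forms the convex weights $c_i=\frac{1\pm 2Re((P_i)_{uv})}{n}$ and squeezes $\sum_ic_i\lambda_i=\frac{2m\pm 2}{n}$ between $\lambda_n$ and $\lambda_1$ (after assuming, without loss of generality, that $uv$ is a digon so the entry is real), whereas you center the sum at $\lambda_1$ or $\lambda_n$ and apply the triangle inequality to $|h_{uv}|=1$, which has the minor advantage of treating an arc directly without first reducing to a real entry.
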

\begin{proof}
Using Equation \ref{diag} and Equation \ref{1} we have:
\[ Q_\gamma=\sum_{i=1}^n\lambda_iP_i \hspace{15 pt} \text{     and for     } r\ne s, \hspace{15 pt} \sum_{i=1}^n \lambda_i(P_i)_{rs}=0.
\]
Without loss of generality, we may assume that $X$ has a digon $uv$. Then,
$ 1=(Q_\gamma)_{uv}=\overline{(Q_\gamma)_{uv}}.$ Therefore,
\[\sum_{i=1}^n\lambda_iRe((P_i)_{uv})=1,
\]
Setting 
\begin{equation}\label{ci}
c_i=\frac{1\pm 2Re((P_i)_{uv})}{n},
\end{equation}
we get
 \begin{align}\label{tr}
\sum_{i=1}^nc_i\lambda_i &=\frac{tr(Q_\gamma)}{n}\pm \frac{2\sum_{i=1}^nRe((P_i)_{uv})\lambda_i}{n}\\
            &=\frac{2m}{n}\pm \frac{2}{n}.
  \end{align} 
On the other hand, using Equation \ref{2}, for $r\in V(X)$ we have
\begin{align}
1 &=\sum_{i=1}^n(P_i)_{rr}=\\
            &=\sum_{i=1}^n(x_i)_r\overline{(x_i)_r}\\
          &=\sum_{i=1}^n|(x_i)_r|^2.
  \end{align} 
Therefore, using Theorem \ref{vec} we have,
 \[ \sum_{r,s\in V(X),r\ne s}|(x_i)_r|^2|(x_i)_s|^2\le \frac{1}{4}.
\]
And thus, $|(P_i)_{rs}| \le \frac{1}{2}$. Which means,
\[ -\frac{1}{2} \le Re((P_i)_{rs}) \le \frac{1}{2}.
\]
So,
\[ 0\le c_i \le \frac{2}{n}
\]
Now, it can be easily seen that $\sum_{i=1}^nc_i=1$. Therefore, 
\[ \lambda_n \le \sum_{i=1}^nc_i\lambda_i \le \lambda_1.
\]
Finally, the fact that $tr(Q_\gamma)=2m$ ends the proof.\\
\end{proof}\\
An immediate consequence of the above theorem is the following corollary:
\begin{corollary}
Let $X$ be a mixed graph, $n=|V(X)|$ and $Q_\gamma$ its signless Laplacian adjacency matrix. Then,
\begin{equation}\label{cor1}
\lambda_1-\lambda_n\ge \frac{4}{n}.
\end{equation}
\end{corollary}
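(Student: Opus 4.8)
The plan is to obtain the result directly from Theorem \ref{ineq1} by eliminating the intermediate quantity $m$. Theorem \ref{ineq1} sandwiches $m$ between two expressions in the extreme $Q$-eigenvalues, namely $\frac{n}{2}\lambda_n+1\le m$ on the lower side and $m\le \frac{n}{2}\lambda_1-1$ on the upper side. Since both of these bound the \emph{same} real number $m$, transitivity of $\le$ allows me to discard $m$ entirely and keep only the comparison between the two outer bounds.

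Concretely, I would first record the full chain
\[
\frac{n}{2}\lambda_n+1\le m\le \frac{n}{2}\lambda_1-1,
\]
which at once forces $\frac{n}{2}\lambda_n+1\le \frac{n}{2}\lambda_1-1$. Next I would collect the eigenvalue terms on one side and the constants on the other, giving $\frac{n}{2}(\lambda_1-\lambda_n)\ge 2$. Finally, dividing through by the positive quantity $n/2$ produces the asserted inequality $\lambda_1-\lambda_n\ge \frac{4}{n}$.

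I do not expect any genuine obstacle here: the substance of the corollary is entirely contained in Theorem \ref{ineq1}, and what remains is a single line of linear rearrangement. The only point deserving explicit mention is that $n=|V(X)|$ is a positive integer, so that division by $n/2$ preserves the direction of the inequality; and since the corollary is stated under exactly the hypotheses of Theorem \ref{ineq1}, no additional conditions need to be verified before applying it.
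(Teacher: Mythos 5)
Your proposal is correct and is exactly the derivation the paper intends: the corollary is stated as an ``immediate consequence'' of Theorem \ref{ineq1}, obtained by chaining the two bounds on $m$ and rearranging, just as you do. Nothing further is needed.
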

Obviously, the lower bound of spread of $Q_\gamma$ in Inequality \ref{cor1} is small. However, the following theorem gives a refinement of this lower bound.

\begin{theorem}
Let $X$ be a mixed graph, $n=|V(X)|$ and $Q_\gamma$ its signless Laplacian adjacency matrix. Then,
\begin{equation}
\lambda_1-\lambda_n\ge 2.
\end{equation}
\end{theorem}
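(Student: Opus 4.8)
The plan is to derive this sharper spread bound from Cauchy's interlacing theorem applied to a single $2\times 2$ principal submatrix of $Q_\gamma$, rather than from a global trace identity. The crucial point is that every off-diagonal entry of $Q_\gamma=D+H_\gamma$ arising from an arc or digon has modulus one, so each edge already contributes a $2\times 2$ principal block whose two eigenvalues are separated by at least $2$; interlacing then forces the same separation onto the extreme $Q$-eigenvalues.

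Concretely, first I would fix any arc or digon $uv$ of $X$, which exists whenever $m\ge 1$ (the only nontrivial case). Restricting $Q_\gamma$ to the rows and columns indexed by $u$ and $v$ gives the Hermitian matrix
\[
M=\begin{pmatrix} deg(u) & h_{uv}\\ \overline{h_{uv}} & deg(v)\end{pmatrix},
\]
where $h_{uv}\in\{1,\gamma,\gamma^2\}$, so $|h_{uv}|=1$. Its eigenvalues $\nu_1\ge\nu_2$ satisfy $\nu_1+\nu_2=deg(u)+deg(v)$ and $\nu_1\nu_2=deg(u)deg(v)-1$, whence
\[
\nu_1-\nu_2=\sqrt{(deg(u)-deg(v))^2+4}\ \ge\ 2,
\]
and this lower bound of $2$ is independent of whether $uv$ is a digon or an arc.

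Next I would invoke Cauchy interlacing for the $n\times n$ Hermitian matrix $Q_\gamma$ with eigenvalues $\lambda_1\ge\cdots\ge\lambda_n$ and its $2\times 2$ principal submatrix $M$. Interlacing yields $\lambda_1\ge \nu_1$ (the top eigenvalue of a principal submatrix is bounded above by $\lambda_1$) and $\nu_2\ge\lambda_n$ (its bottom eigenvalue is bounded below by $\lambda_n$). Combining these with the gap of $M$ gives
\[
\lambda_1-\lambda_n\ \ge\ \nu_1-\nu_2\ \ge\ 2,
\]
which is exactly the assertion. Incidentally this refines the previous corollary, since $4/n\le 2$ for all $n\ge 2$.

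I do not anticipate a genuine obstacle; the only points requiring care are selecting the interlacing indices correctly (pairing $\nu_1$ with $\lambda_1$ and $\nu_2$ with $\lambda_n$) and confirming that the modulus-one off-diagonal entry makes the discriminant equal to $(deg(u)-deg(v))^2+4$ in both the digon and arc cases, so the $\ge 2$ estimate never depends on the particular value of $h_{uv}$.
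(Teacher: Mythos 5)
Your proof is correct, but it takes a genuinely different route from the paper. The paper stays inside the spectral-projection framework it uses throughout the section: writing $Q_\gamma=\sum_i\lambda_iP_i$, using $\sum_i(P_i)_{uv}=0$ to shift the eigenvalues by $\tfrac{\lambda_1+\lambda_n}{2}$, and then bounding $\sum_i|(P_i)_{uv}|\le 1$ via positive semidefiniteness of each $P_i$, which gives $1=|(Q_\gamma)_{uv}|\le\tfrac{\lambda_1-\lambda_n}{2}$. You instead extract the $2\times 2$ principal submatrix at an arc or digon and apply Cauchy interlacing; both arguments ultimately rest on the same structural fact, namely that the off-diagonal entry of $Q_\gamma$ at any arc or digon has modulus one (so the value of $h_{uv}\in\{1,\gamma,\gamma^2\}$ is irrelevant). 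Your computation of the discriminant and your use of interlacing ($\lambda_1\ge\nu_1$, $\nu_2\ge\lambda_n$ for a $2\times 2$ principal submatrix) are both right. What your approach buys is that it is more elementary --- no spectral decomposition needed --- and it actually proves the strictly stronger bound
\[
\lambda_1-\lambda_n\ \ge\ \max_{uv\in \vec{E}(X)\cup E(X)}\sqrt{(deg(u)-deg(v))^2+4},
\]
which improves on $2$ whenever some arc or digon joins vertices of unequal degree; the paper's method does not see the degrees at all. What the paper's method buys is uniformity: the same $P_i$ machinery drives its neighbouring results on $m$ and on $\lambda_1$. One shared caveat: both proofs (and the theorem as stated) implicitly assume $X$ has at least one arc or digon, since for an edgeless graph $\lambda_1-\lambda_n=0$; you flag this as ``the only nontrivial case,'' but it is really a missing hypothesis rather than a trivial case.
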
 
\begin{proof}
Let $uv$ be any arc or digon in $X$. Then
\begin{align}
1 &= |\left(Q_\gamma \right)_{uv}|\\
&=\left| \sum_{i=1}^n \lambda_i (P_i)_{uv} \right|.
\end{align}
Observing that $\sum_{i=1}^n (P_i)_{uv}=0$, we have 
\begin{align}
1&=\left| \sum_{i=1}^n (\lambda_i-\frac{\lambda_1+\lambda_n}{2} (P_i)_{uv} \right|\\
&\le  \sum_{i=1}^n\left| (\lambda_i-\frac{\lambda_1+\lambda_n}{2}\right| \left|(P_i)_{uv} \right|\\
&\le \frac{\lambda_1-\lambda_n}{2}\sum_{i=1}^n\left|(P_i)_{uv} \right|.
\end{align}
Now, since $P_i$ is positive semidefinite we have,
\[ \left|(P_i)_{uv} \right| \le \sqrt{(P_i)_{uu}(P_i)_{vv}} \le \frac{(P_i)_{uu}+(P_i)_{vv}}{2}.
\]
Therefore,
\begin{align}
\sum_{i=1}^n\left|(P_i)_{uv} \right|&\le \sum_{i=1}^n \frac{(P_i)_{uu}+(P_i)_{vv}}{2}\\
&=1.
\end{align}
Which ends the proof.
\end{proof}\\
The following is a sharper inequality for the right hand side of Theorem \ref{ineq1}.
\begin{theorem}\label{digon}
Let $X$ be a mixed graph, $k=|\vec{E}(X)|$, $l=|E(X)|$ and $Q_\gamma$ be its signless Laplacian adjacency matrix. Then,
\[ \lambda_1 \ge \frac{4l+k}{n}.
\]
\end{theorem}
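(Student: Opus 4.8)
The plan is to bound $\lambda_1$ from below via the Rayleigh--Ritz characterization of the largest eigenvalue of the Hermitian matrix $Q_\gamma$: for any unit vector $x\in\mathbb{C}^n$ one has $x^*Q_\gamma x\le\lambda_1$. The natural test vector is the normalized all-ones vector $x=\frac{1}{\sqrt n}\mathbf{1}$, which immediately yields
\[
\lambda_1\ge \frac{1}{n}\,\mathbf{1}^*Q_\gamma\mathbf{1}=\frac{1}{n}\sum_{r,s\in V(X)}(Q_\gamma)_{rs}.
\]
Since $Q_\gamma$ is Hermitian this quadratic form is guaranteed to be real, so the remaining work is a purely combinatorial bookkeeping of the entry sum.

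To evaluate $\sum_{r,s}(Q_\gamma)_{rs}$ I would use the decomposition $Q_\gamma=D+H_\gamma$ from Observation \ref{ob1}. The diagonal part contributes $\sum_{r}deg(r)=2m=2(k+l)$. For the off-diagonal part I split the sum according to the type of edge: each digon $uv\in E(X)$ contributes the two entries $h_{uv}=h_{vu}=1$, for a total of $2$, while each arc $\vec{uv}\in\vec{E}(X)$ contributes the conjugate pair $h_{uv}=\gamma$ and $h_{vu}=\gamma^2$, whose sum is $\gamma+\gamma^2=-1$ because $1+\gamma+\gamma^2=0$. Hence $\sum_{r,s}(H_\gamma)_{rs}=2l-k$, and combining the two parts gives
\[
\mathbf{1}^*Q_\gamma\mathbf{1}=2(k+l)+(2l-k)=4l+k,
\]
which delivers the claimed bound $\lambda_1\ge\frac{4l+k}{n}$.

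I do not expect a serious obstacle here; the only point demanding care is the arithmetic with the primitive cube root of unity (using $1+\gamma+\gamma^2=0$, hence $\gamma+\gamma^2=-1$) and the correct accounting of each arc and digon in the entry sum. The mechanism behind the improvement over Theorem \ref{ineq1} is transparent in this computation: a digon contributes a positive entry to $Q_\gamma$, so the all-ones test vector detects its full weight, whereas the conjugate pair produced by an arc sums to $-1$ and partially cancels, which is precisely why the digon count $l$ enters with the larger coefficient.
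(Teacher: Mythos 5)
Your proof is correct and follows essentially the same route as the paper: both arguments reduce to showing $\sum_{u,v}(Q_\gamma)_{uv}=4l+k$ (with the same bookkeeping --- $2(k+l)$ from the diagonal, $+2$ per digon, and $\gamma+\gamma^2=-1$ per arc) and then bounding this entry sum above by $n\lambda_1$. The only difference is presentational: you get $\sum_{u,v}(Q_\gamma)_{uv}\le n\lambda_1$ directly from the Rayleigh--Ritz quotient of the all-ones vector, whereas the paper derives the same bound by expanding $Q_\gamma=\sum_i\lambda_i P_i$ and discarding the off-diagonal projector contributions; your phrasing is the cleaner of the two.
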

\begin{proof}
First, note that
\begin{align}
\sum_{u,v\in V(X)}(Q_\gamma)_{uv}&=\sum_{u\in V(X)}deg(u)+\sum_{u,v\in V(X),u\ne v}(Q_\gamma)_{uv}\\
&=2(k+l)-k+2l\\
&=4l+k
\end{align}
On the other hand,
\begin{align}
\sum_{u,v\in V(X)}(Q_\gamma)_{uv}&=\sum_{u,v\in V(X)}\sum_{i=1}^n \lambda_i (P_i)_{uv}\\
&=\sum_{u\in V(X)}\sum_{i=1}^n \lambda_i (P_i)_{uu}+\sum_{u,v\in V(X),u\ne v}\sum_{i=1}^n \lambda_i (P_i)_{uv}\\
&=\sum_{i=1}^n\lambda_i\sum_{u\in V(X)}(P_i)_{uu}+\sum_{i=1}^n\lambda_i\sum_{u,v\in V(X),u\ne v}(P_i)_{uv}.
\end{align}
But, using Equation \ref{1} we have $\sum_{u,v\in V(X),u\ne v}(P_i)_{uv}=0$. Therefore,
\begin{align}
\sum_{u,v\in V(X)}(Q_\gamma)_{uv}&=\sum_{i=1}^n\lambda_i\sum_{u\in V(X)}(P_i)_{uu}\\
&\le \lambda_1 \sum_{i=1}^n\sum_{u\in V(X)}(P_i)_{uu}\\
&=n\lambda_1
\end{align}
Therefore,
$\lambda_1 \ge \frac{4l+k}{n}.$
\end{proof}\\
One can easily check that the above inequality is sharp when $X$ is regular un-oriented graph.
\begin{example}
\begin{figure}[H] 
\begin{subfigure}{1\textwidth}
  \centering
  \includegraphics[width=.5\linewidth]{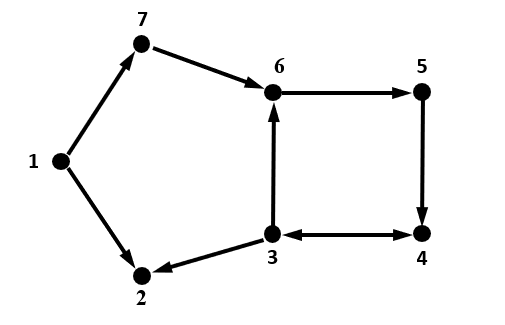}  
  \caption{The mixed graph $X$.}
  \label{fig:sub-firstb}
\end{subfigure}
\begin{subfigure}{1\textwidth}
 \centering
  \includegraphics[width=.5\linewidth]{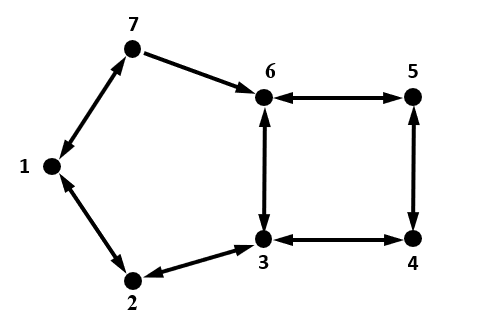}  
  \caption{The mixed graph $Y$}
  \label{fig:sub-secondb}
\end{subfigure}
\caption{The mixed graphs $X$ and $Y$ are cospectral }
\label{fig2}
\end{figure}
Consider the mixed graphs $X$ and $Y$ shown in Figure \ref{fig2}. Applying Theorem \ref{digon} one can get $\lambda_1(X)\ge 1.57$. Note that a better lower bound can be obtained using switching equvilance idea. To clarify that let $X'=X-{\vec{76}}$ and $Y'=Y-{\vec{76}}$. Then, obviously $Q_\gamma(X')$ and $Q_\gamma(Y')$ are similar with similarity matrix
\[S=\left(
\begin{array}{ccccccc}
 1 & 0 & 0 & 0 & 0 & 0 & 0 \\
 0 & \gamma & 0 & 0 & 0 & 0 & 0 \\
 0 & 0 & 1 & 0 & 0 & 0 & 0 \\
 0 & 0 & 0 & 1 & 0 & 0 & 0 \\
 0 & 0 & 0 & 0 & \gamma^2 & 0 & 0 \\
 0 & 0 & 0 & 0 & 0 & \gamma & 0 \\
 0 & 0 & 0 & 0 & 0 & 0 & \gamma \\
\end{array}
\right)\]
Therefore, the mixed graphs $X$ and $Y$ are $Q_\gamma$ cospectral. Thus, a better lower bound, $\lambda_1\ge 4.142$ can be obtained when appling Theorem  \ref{digon} for the mixed graph $Y$.
\end{example}

Finally, we want to point out that the technique used in proving Theorem \ref{ineq1} can be utilize  to obtain other upper bounds as well as lower bounds for the $\lambda_1$ and $\lambda_n$. For example consider the following theorem, which can be found in \cite{dragomir2003survey} page 72.

\begin{theorem}[The Cassels’ Inequality]
If the positive real sequences $a = (a_1,a_2, . . . , a_n)$ and $b = (b_1, b_2, . . . , b_n)$ satisfy the condition:
\begin{equation}
0<m\le \frac{a_k}{b_k}\le M <\infty, \text{ for all  } k.
\end{equation}
and $c = (c_1,c_2, . . . , c_n)$ is a sequence of non-negative real numbers. Then, 
\begin{equation}
\frac{\left( \sum_{i=1}^nc_ia_i^2\right)\left( \sum_{i=1}^nc_ib_i^2\right)}{\left( \sum_{i=1}^nc_ia_ib_i\right)^2}\le \frac{(M+m)^2}{4Mm}
\end{equation}
\end{theorem}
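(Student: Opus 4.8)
The plan is to reduce the inequality to a single pointwise estimate derived from the ratio bounds, then sum against the weights $c_k$ and finish with the AM--GM inequality. First I would use the hypothesis: since $b_k>0$, the condition $m\le a_k/b_k\le M$ is equivalent to $a_k-mb_k\ge 0$ and $Mb_k-a_k\ge 0$, so the product of these two quantities is non-negative,
\[
(a_k-mb_k)(Mb_k-a_k)\ge 0 \qquad \text{for every } k.
\]

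Expanding the product yields the key pointwise inequality
\[
a_k^2+mM\,b_k^2\le (M+m)\,a_k b_k.
\]
Multiplying through by $c_k\ge 0$, summing over $k$, and abbreviating $A=\sum_k c_k a_k^2$, $B=\sum_k c_k b_k^2$ and $P=\sum_k c_k a_k b_k$, I obtain the linear relation
\[
A+mM\,B\le (M+m)\,P.
\]

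The final step bounds the left-hand side from below by AM--GM. Since $A,B\ge 0$ and $mM>0$,
\[
2\sqrt{mM}\,\sqrt{AB}\le A+mM\,B\le (M+m)\,P,
\]
and squaring (all three quantities being non-negative) gives $4mM\,AB\le (M+m)^2P^2$, which is precisely the asserted bound $AB/P^2\le (M+m)^2/(4Mm)$.

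The only real ingenuity lies in spotting the factorization $(a_k-mb_k)(Mb_k-a_k)\ge 0$; once that pointwise inequality is in hand the remainder is mechanical. I expect the main obstacle to be bookkeeping rather than substance: confirming that $P>0$ so the division is legitimate (which holds because each $a_kb_k>0$ and the $c_k$ are not all zero), and keeping straight the orientation of the AM--GM step, since here it reverses the usual Cauchy--Schwarz inequality to produce an \emph{upper} bound on the ratio rather than a lower one.
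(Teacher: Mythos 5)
Your argument is correct and complete: the factorization $(a_k-mb_k)(Mb_k-a_k)\ge 0$, the weighted summation, and the AM--GM step $A+mMB\ge 2\sqrt{mM}\sqrt{AB}$ together give exactly the stated bound, and you rightly flag the degenerate case $P=0$ (all $c_k=0$), which the theorem's hypotheses do not formally exclude. The paper itself offers no proof to compare against --- it quotes Cassels' inequality from Dragomir's survey --- but yours is the standard derivation of that result and there is nothing to correct.
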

Now, Setting $a_i=\lambda_i$, $c_i=\frac{1\pm 2Re((P_i)_{uv})}{n}$ and $b_i=1$ for $i=1,2,\dots,n$ we get,
\begin{equation}\label{cass}
\frac{\left( \sum_{i=1}^nc_i\lambda_i^2\right)}{\left( \sum_{i=1}^nc_i\lambda_i\right)^2}\le \frac{(\lambda_1+\lambda_n)^2}{4\lambda_1\lambda_n}.
\end{equation}
Therefore, 
\begin{equation}
n\frac{tr(Q_\gamma^2)+2}{(2m-2)^2}\le \frac{(\lambda_1+\lambda_n)^2}{4\lambda_1\lambda_n}.
\end{equation}
Finally, observe that $Q_\gamma^2=\sum_i \lambda_i^2 P_i$, we get the following theorem:
\begin{theorem}
Let $X$ be a mixed graph and $Q_\gamma$ be its Laplacian adjacency matrix. If $Q_\gamma$ is non-singular, then
\[
\frac{2m+2+\sum_{uv\in \vec{E}(X)\cup E(X)}(deg(u)+deg(v))}{(m-1)^2} \le \frac{(\lambda_1+\lambda_n)^2}{n\lambda_1 \lambda_n}
\]
\end{theorem}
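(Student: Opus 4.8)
The plan is to recognize that the asserted inequality is a repackaging of the Cassels-based estimate displayed immediately above the statement, namely $n\dfrac{tr(Q_\gamma^2)+2}{(2m-2)^2}\le \dfrac{(\lambda_1+\lambda_n)^2}{4\lambda_1\lambda_n}$, combined with the second-moment identity of Theorem \ref{lambda2}. First I would record why non-singularity is the crucial hypothesis: since $Q_\gamma=B_\gamma B_\gamma^*$ is positive semidefinite all $\lambda_i\ge 0$, and non-singularity upgrades this to $0<\lambda_n\le\lambda_i\le\lambda_1$. This is exactly the requirement $0<m\le a_k/b_k\le M$ of the Cassels inequality once one sets $a_i=\lambda_i$ and $b_i=1$ (so that $m=\lambda_n$ and $M=\lambda_1$), and it also keeps the denominator $n\lambda_1\lambda_n$ of the target nonzero.

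Next I would carry out the Cassels application in full. With $c_i=\frac{1\pm 2\mathrm{Re}((P_i)_{uv})}{n}$ for a fixed digon $uv$, the coefficients are non-negative (this is the bound $0\le c_i\le \tfrac{2}{n}$ already established inside the proof of Theorem \ref{ineq1}), so Cassels applies, and because $b_i\equiv 1$ its right-hand side collapses to $\frac{(\lambda_1+\lambda_n)^2}{4\lambda_1\lambda_n}$; this is Inequality \ref{cass}. For the left-hand side I would evaluate the two weighted moments from the spectral resolutions $Q_\gamma=\sum_i\lambda_iP_i$ and $Q_\gamma^2=\sum_i\lambda_i^2P_i$. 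Using $\sum_i c_i=1$, $tr(Q_\gamma)=2m$, and $\mathrm{Re}((Q_\gamma)_{uv})=\sum_i\lambda_i\mathrm{Re}((P_i)_{uv})=1$ for the digon, the denominator is $\big(\sum_i c_i\lambda_i\big)^2=\big(\tfrac{2m\pm 2}{n}\big)^2$, which is Equation \ref{tr}; selecting the minus sign produces the factor $(2m-2)^2$. The numerator $\sum_i c_i\lambda_i^2=\frac{1}{n}\,tr(Q_\gamma^2)\pm\frac{2}{n}\,\mathrm{Re}((Q_\gamma^2)_{uv})$ is obtained by applying the same resolution to $Q_\gamma^2$, exactly as the sentence preceding the theorem indicates.

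Finally I would substitute Theorem \ref{lambda2}, $tr(Q_\gamma^2)=2m+\sum_{uv\in \vec{E}(X)\cup E(X)}(deg(u)+deg(v))$, into the numerator, clear the constant $4$ and the factor $n$ (rewriting $(2m-2)^2=4(m-1)^2$), and rearrange to isolate $\frac{(\lambda_1+\lambda_n)^2}{n\lambda_1\lambda_n}$ on the right, which yields precisely the stated inequality. \textbf{The main obstacle} is the honest evaluation of the second moment $\sum_i c_i\lambda_i^2$: one must pin down the off-diagonal entry $\mathrm{Re}((Q_\gamma^2)_{uv})$ for the chosen digon and fix the sign in $c_i$ consistently with the sign forced on the denominator, since it is exactly this off-diagonal computation that determines the additive constant appearing in the numerator $2m+2+\sum_{uv\in \vec{E}(X)\cup E(X)}(deg(u)+deg(v))$. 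Everything else is the spectral-projector bookkeeping already developed for Theorems \ref{ineq1} and \ref{lambda2}, so the real care goes into this one entry and the attendant sign tracking.
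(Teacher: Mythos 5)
Your proposal follows the paper's own route exactly: the Cassels inequality applied with $a_i=\lambda_i$, $b_i=1$, and $c_i=\frac{1\pm 2Re((P_i)_{uv})}{n}$ for a fixed digon $uv$, the identities $\sum_i c_i=1$, $tr(Q_\gamma)=2m$ and Theorem \ref{lambda2}, and a final rearrangement of Inequality \ref{cass}. The step you single out as the main obstacle --- verifying that the second moment $\sum_i c_i\lambda_i^2$ really equals $\frac{tr(Q_\gamma^2)+2}{n}$, i.e.\ that the off-diagonal term $\pm\frac{2}{n}Re((Q_\gamma^2)_{uv})$ contributes exactly $+\frac{2}{n}$ with a sign consistent with the $(2m-2)^2$ in the denominator --- is precisely the step the paper itself passes over without justification, so you have correctly located the only point of the argument that requires genuine care.
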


\bibliography{signless10}

\end{document}